\newtheorem{theorem}{Theorem}
\newtheorem*{theorem*}{Theorem}
\newtheorem{proposition}{Proposition}
\newtheorem{lemma}{Lemma}
\newtheorem{definition}{Definition}
\newtheorem{corollary}{Corollary}
\begin{document}
\author{Alexey Petukhov}
\title[A geometric description of epimorphic subgroups]{A geometric description of epimorphic subgroups}
\address{Chair of Higher Algebra, Mathematical Department, Moscow State University named after M.V. Lomonosov}\address{ Jacobs University Bremen, Germany D-28759}
\maketitle
\section{Introduction}
Let $G$ be an algebraic group over an algebraically closed field $\mathbb K$ of characteristic 0 and let $H\subset G$ be a subgroup. Let $\widehat H\subset G$ be a subgroup of $G$ which preserves all elements of a subset $\mathbb K[G]^H\subset\mathbb K[G]$. We call the subgroup $\widehat H$ an observable hall of $H$ in $G$~\cite{Gr}. It is well known that for any finite-dimensional $G$-module $V$ the space $V^H$ coincides with $V^{\widehat H}$~\cite{Gr}. This property motivates the definition of an observable hall. A subgroup $H$ is called epimorphic in $G$ if $\widehat H=G$. A subgroup $H$ is called observable in $G$ if $\widehat H=H$.
\begin{definition}\upshape Let $G$ be a semisimple algebraic group and let $H\subset G$ be a reductive subgroup. Orthogonal centralizer to $H$ in $G$ is a subgroup $\mathrm Z^\perp_G(H)$ satisfying the following conditions:\\1) Group  $\mathrm Z_G^{\perp}(H)$ is irreducible;\\2) $\operatorname{lie}\mathrm{Z}_G^{\perp}(H)=\{z\in\operatorname{lie}G:\forall h\in\operatorname{lie}H~[z,h]=0~\&~(z,h)=0\}$. There ${(\,\cdot\,{,}\,\cdot\,)}$ is the Cartan-Killing form of $\operatorname{lie}G$.\end{definition}
Let $H\subset G$ be a subgroup. Let $\mathfrak g, \mathfrak h$ be Lie algebras of $G$ and $H$ respectively. The subgroup $H\subset G$ determines vector $v_H\in$Hom$_\mathbb K(\mathfrak h, \mathfrak g)$. The space Hom$_\mathbb K(\mathfrak h, \mathfrak g)$ has an action of $G$ induced from the adjoint $G$-action on $\mathfrak g$. Let R$_\mathrm uH$ be a unipotent radical of $H$ and L$_H$ be a maximal reductive subgroup of $H$ (i.e. Levi subgroup).
\begin{theorem}\upshape \label{th1} Nonreductive algebraic subgroup $H$ of a reductive algebraic group $G$ is observable in $G$ if and only if the closure of the $\mathrm Z_G^{\perp}(\mathrm L_H)$-orbit of $v^{}_{\mathrm R_{\mathrm{u}}(H)}$ contains 0.\end{theorem}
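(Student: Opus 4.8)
The plan is to read the orbit-closure condition through the Hilbert--Mumford criterion and then to identify the resulting cocharacter with a parabolic subgroup adapted to $\mathrm{L}_H$. First I would record that $\mathrm{Z}_G^{\perp}(\mathrm{L}_H)$ is reductive: writing $\mathfrak l=\operatorname{lie}\mathrm{L}_H$, the centralizer $\mathfrak z_{\mathfrak g}(\mathfrak l)$ is reductive, it is automatically orthogonal to $[\mathfrak l,\mathfrak l]$ by invariance of $(\,\cdot\,,\,\cdot\,)$, and intersecting with the orthogonal complement of the center $\mathfrak z(\mathfrak l)$ removes an invariant subspace, leaving an ideal; hence $\operatorname{lie}\mathrm{Z}_G^{\perp}(\mathrm{L}_H)$ is reductive. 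Therefore Hilbert--Mumford (in Kempf's form) applies to the linear action of $\mathrm{Z}_G^{\perp}(\mathrm{L}_H)$ on $\operatorname{Hom}_{\mathbb K}(\mathfrak n,\mathfrak g)$, where $\mathfrak n=\operatorname{lie}\mathrm{R}_{\mathrm u}(H)$: the closure of the orbit of $v_{\mathrm{R}_{\mathrm u}(H)}$ contains $0$ if and only if there is a cocharacter $\lambda$ of $\mathrm{Z}_G^{\perp}(\mathrm{L}_H)$ with $\lim_{t\to 0}\lambda(t)\cdot v_{\mathrm{R}_{\mathrm u}(H)}=0$. Since the action is $\phi\mapsto\operatorname{Ad}(\lambda(t))\circ\phi$ and $v_{\mathrm{R}_{\mathrm u}(H)}$ is the inclusion, this limit condition is exactly $\mathfrak n\subseteq\mathfrak g_{>0}$, the sum of the strictly positive weight spaces of $\lambda$.

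Next I would translate this into the language of parabolics. A cocharacter $\lambda$ of $\mathrm{Z}_G^{\perp}(\mathrm{L}_H)$ centralizes $\mathrm{L}_H$ and is orthogonal to $\mathfrak l$; it determines a parabolic $\mathrm{P}(\lambda)$ whose Levi $\mathrm{Z}_G(\lambda)$ contains $\mathrm{L}_H$ and whose unipotent radical has Lie algebra $\mathfrak g_{>0}$. Thus $\mathfrak n\subseteq\mathfrak g_{>0}$ says precisely that $\mathrm{R}_{\mathrm u}(H)$ lies in the unipotent radical of $\mathrm{P}(\lambda)$, so that $H\subseteq\mathrm{P}(\lambda)$ with $\mathrm{L}_H$ in a Levi factor and $\mathrm{R}_{\mathrm u}(H)$ in the nilradical. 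The essential point, distinguishing observable from epimorphic behaviour, is the orthogonality $(\operatorname{d}\lambda,\mathfrak l)=0$: it forces $\lambda$ to be \emph{external} to $\mathrm{L}_H$ rather than absorbed into it. In $G=\mathrm{SL}_2$ this is already visible: the maximal unipotent subgroup is observable and the nilpotent cone degenerates its generator to $0$ under $\mathrm{Z}_G^{\perp}(\{e\})=G$, whereas for a Borel $\mathrm{Z}_G^{\perp}(T)$ is trivial because the diagonal torus is not orthogonal to itself, matching the non-observability of the Borel.

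For sufficiency I would, given such a $\lambda$, certify that $G/H$ is quasi-affine by realizing $H$ as the exact stabilizer of a vector in a $G$-module. I would take the top exterior vector $\omega_{\mathfrak h}\in\bigwedge^{\dim\mathfrak h}\mathfrak g$ together with auxiliary tensors recording $\mathfrak l$ and the $\lambda$-grading; the orthogonality $(\operatorname{d}\lambda,\mathfrak l)=0$ is exactly what is needed to kill the character obstruction coming from the action of $\mathrm{L}_H$ on $\mathfrak n$, so that the stabilizer collapses to $H$ rather than to a larger subgroup of the normalizer. Equivalently, this is the sufficiency half of Sukhanov's subparabolic criterion for observability, with $\lambda$ providing the parabolic and the orthogonality supplying the missing condition. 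For necessity I would argue contrapositively: if no contracting cocharacter exists then Hilbert--Mumford produces a nonconstant $\mathrm{Z}_G^{\perp}(\mathrm{L}_H)$-invariant homogeneous polynomial $f$ on $\operatorname{Hom}_{\mathbb K}(\mathfrak n,\mathfrak g)$ with $f(v_{\mathrm{R}_{\mathrm u}(H)})\neq 0$; such an invariant obstructs every external degeneration of $\mathrm{R}_{\mathrm u}(H)$ and thereby forces the observable hull $\widehat H$ to enlarge the reductive part (as with the Borel), so $\widehat H\neq H$ and $H$ is not observable.

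The main obstacle I anticipate is precisely this equivalence between the geometric contraction and global observability, i.e. converting the existence, respectively non-existence, of an \emph{orthogonal} contracting cocharacter into the existence, respectively non-existence, of enough $G$-invariant functions separating $H$. Two points need real care: showing that a cocharacter failing the orthogonality condition---one with a component inside $\mathrm{L}_H$---can never certify observability, and conversely building, from an orthogonal contraction, the explicit module and vector whose stabilizer is exactly $H$. Pinning down the interaction between the Killing-form orthogonality in the definition of $\mathrm{Z}_G^{\perp}$ and the representation-theoretic description of $\widehat H$ is where the substance of the proof lies.
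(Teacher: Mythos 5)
Your sufficiency half (orbit closure contains $0$ $\Rightarrow$ $H$ observable) is correct and is essentially the paper's own argument in Proposition~\ref{GOb}: Hilbert--Mumford applied to the reductive group $\mathrm Z_G^{\perp}(\mathrm L_H)$ produces a rational semisimple $s=\operatorname{d}\lambda\in\operatorname{lie}\mathrm Z_G^{\perp}(\mathrm L_H)$ with $\operatorname{lie}\mathrm R_{\mathrm u}(H)\subset\mathfrak n_s$, and since $[s,\mathfrak l]=0$ and $(s,\mathfrak l)=0$ one gets $\operatorname{lie}H\subset\mathfrak q_s$, after which Sukhanov's criterion concludes. Your detour through top exterior vectors and ``auxiliary tensors'' is superfluous once Sukhanov is invoked, while your preliminary check that $\mathrm Z_G^{\perp}(\mathrm L_H)$ is reductive is a worthwhile point the paper leaves implicit.

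The genuine gap is in the necessity half. You argue contrapositively: no contracting cocharacter of $\mathrm Z_G^{\perp}(\mathrm L_H)$ exists, hence there is a $\mathrm Z_G^{\perp}(\mathrm L_H)$-invariant $f$ with $f(v_{\mathrm R_{\mathrm u}(H)})\neq 0$, hence $\widehat H$ ``enlarges the reductive part'' and $H$ is not observable. That last implication \emph{is} the theorem, and you give no argument for it: the invariant $f$ only excludes degenerations along $\mathrm Z_G^{\perp}(\mathrm L_H)$, whereas a Sukhanov certificate of observability is an arbitrary rational semisimple $s\in\mathfrak g$, with no commutation or orthogonality relation to $\mathfrak l$ imposed a priori. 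The missing idea is to run the implication forwards and \emph{normalize} the certificate: if $H$ is observable, pick $s$ with $\operatorname{lie}H\subset\mathfrak q_s$ and $\operatorname{lie}\mathrm R_{\mathrm u}(H)\subset\mathfrak n_s$; since $\mathrm L_H$ is a reductive subgroup of $Q_s$, conjugation by some $u\in\mathrm R_{\mathrm u}(Q_s)$ places it inside a Levi factor of $Q_s$, whose Lie algebra is $\{z\in\mathfrak g_0:(z,s)=0\}$; then $s'=\mathrm{Ad}(u^{-1})s$ commutes with and is Killing-orthogonal to $\mathfrak l$, i.e.\ $s'\in\operatorname{lie}\mathrm Z_G^{\perp}(\mathrm L_H)$, while $\mathfrak q_{s'}=\mathfrak q_s$ and $\mathfrak n_{s'}=\mathfrak n_s$, so $\lim_{t\to\infty}\bar s'(t)v_{\mathrm R_{\mathrm u}(H)}=0$. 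This is exactly what the paper assembles from its Proposition on $Q_s$ (the limit along $\bar s$ contracts $v_{\mathrm R_{\mathrm u}(Q_s)}$ to $0$) together with the monotonicity Lemma (the orbit-closure condition passes from $Q_s$ down to $H\subset Q_s$ with $\mathrm R_{\mathrm u}(H)\subset\mathrm R_{\mathrm u}(Q_s)$, using Proposition~\ref{ChP}), yielding Corollary~\ref{ObG}. Note finally that your closing worry---that a cocharacter ``with a component inside $\mathrm L_H$'' can never certify observability---points the wrong way: such a cocharacter can be a perfectly good Sukhanov certificate; the actual content is that every certificate can be replaced, by a unipotent conjugation, with one lying in $\mathrm Z_G^{\perp}(\mathrm L_H)$.
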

\begin{theorem}\upshape \label{th2} Nonreductive algebraic subgroup $H$ of a reductive algebraic group $G$ is epimorphic in $G$ if and only if the $\mathrm Z_G^{\perp}(\mathrm L_H)$-orbit of $v^{}_{\mathrm R_{\mathrm{u}}(H)}$ is closed and $H$ is not contained in a proper reductive subgroup of $G$.\end{theorem}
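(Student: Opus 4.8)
The plan is to deduce Theorem~\ref{th2} from the observability criterion of Theorem~\ref{th1} together with two standard facts: that a subgroup is epimorphic precisely when its observable hull $\widehat H$ equals $G$, where $\widehat H$ is the smallest observable subgroup containing $H$; and that every reductive subgroup is observable, since $G/M$ is affine for reductive $M$ (Matsushima). The second fact immediately disposes of the reductive clause and of one half of the equivalence: if $H$ lies in a proper reductive subgroup $M$, then $M$ is a proper observable overgroup, so $\widehat H\subseteq M\subsetneq G$ and $H$ is not epimorphic, while the second condition of the theorem also fails. Hence both sides are false in this case, and I may restrict to subgroups $H$ lying in no proper reductive subgroup. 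Under this restriction $\widehat H$ is itself nonreductive (else it would be a proper reductive overgroup), and what remains is to show: $H$ is epimorphic if and only if the orbit $\mathrm Z_G^\perp(\mathrm L_H)\cdot v_{\mathrm R_{\mathrm u}(H)}$ is closed.

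First I would record the geometric meaning of the action. Writing $\mathfrak n=\operatorname{lie}\mathrm R_{\mathrm u}(H)$ and identifying $v_{\mathrm R_{\mathrm u}(H)}$ with the inclusion $\mathfrak n\hookrightarrow\mathfrak g$, a one-parameter subgroup $\lambda$ of $\mathrm Z_G^\perp(\mathrm L_H)$ sends this vector to $x\mapsto\operatorname{Ad}\lambda(t)\,x$, so $\lim_{t\to 0}\lambda(t)\cdot v_{\mathrm R_{\mathrm u}(H)}$ exists exactly when $\mathfrak n$ has no negative $\lambda$-weights and equals $0$ exactly when all weights are positive. Thus Theorem~\ref{th1} reads: $H$ is observable iff some $\lambda$ contracts $\mathfrak n$ into the strictly positive part $\mathfrak g_{>0}(\lambda)$. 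Since $\mathrm Z_G^\perp(\mathrm L_H)$ is reductive, the Hilbert--Mumford criterion governs the linear action, and the trichotomy is the expected GIT picture: the unique closed orbit in $\overline{\mathrm Z_G^\perp(\mathrm L_H)\cdot v_{\mathrm R_{\mathrm u}(H)}}$ is $\{0\}$ (observable case), is the orbit itself (the epimorphic case to be established, since $v_{\mathrm R_{\mathrm u}(H)}\neq 0$), or is an intermediate closed orbit (neither).

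For the implication ``not epimorphic $\Rightarrow$ orbit not closed'' I would take the nonreductive proper hull $\widehat H$ and apply Theorem~\ref{th1} to it, producing a one-parameter subgroup $\mu$ of $\mathrm Z_G^\perp(\mathrm L_{\widehat H})$ contracting $\operatorname{lie}\mathrm R_{\mathrm u}(\widehat H)$ to $0$. Choosing Levi subgroups with $\mathrm L_H\subseteq\mathrm L_{\widehat H}$, the defining conditions of the orthogonal centralizer give at once $\operatorname{lie}\mathrm Z_G^\perp(\mathrm L_{\widehat H})\subseteq\operatorname{lie}\mathrm Z_G^\perp(\mathrm L_H)$ (centralizing and being Cartan--Killing orthogonal to the larger $\operatorname{lie}\mathrm L_{\widehat H}$ is stronger), and since both groups are irreducible this yields $\mathrm Z_G^\perp(\mathrm L_{\widehat H})\subseteq\mathrm Z_G^\perp(\mathrm L_H)$, so $\mu$ is a one-parameter subgroup of $\mathrm Z_G^\perp(\mathrm L_H)$ too. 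The crucial subtlety is that $\mathfrak n$ need not lie in $\operatorname{lie}\mathrm R_{\mathrm u}(\widehat H)$: under the Levi decomposition of $\widehat H$ the algebra $\mathfrak n$ has a possibly nonzero projection to $\operatorname{lie}\mathrm L_{\widehat H}=\mathfrak g_0(\mu)$, so $\lim_{t\to0}\mu(t)\cdot v_{\mathrm R_{\mathrm u}(H)}$ equals this projection and lies in the closure of the orbit. Proving that this limit actually leaves the orbit --- i.e. controlling the projection of $\mathfrak n$ to the Levi of $\widehat H$ and showing it forces a genuine degeneration --- is the main obstacle, and it is exactly where the hypothesis ``no proper reductive overgroup'' enters, preventing the projection from absorbing all of $\mathfrak n$.

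For the converse ``orbit not closed $\Rightarrow$ not epimorphic'' I would run this backwards: a destabilizing $\lambda$ with $\mathfrak n\subseteq\mathfrak g_{\ge0}(\lambda)$ and limit outside the orbit determines the parabolic $\mathrm P(\lambda)$ with Levi $\mathrm Z_G(\lambda)\supseteq\mathrm L_H$, from which I would assemble, out of $\mathrm L_H$, the nonnegative weight spaces, and $\mathrm R_{\mathrm u}(H)$, an honest subgroup $M$ with $H\subseteq M\subsetneq G$, and then verify that $M$ is observable by exhibiting via Theorem~\ref{th1} a one-parameter subgroup of $\mathrm Z_G^\perp(\mathrm L_M)$ contracting $\operatorname{lie}\mathrm R_{\mathrm u}(M)$ to $0$. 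Integrating the relevant subalgebras to subgroups and checking observability of $M$ should be routine once the weight bookkeeping of the previous paragraph is in place; I expect essentially all the work to be concentrated in the correspondence between degenerations of $v_{\mathrm R_{\mathrm u}(H)}$ under $\mathrm Z_G^\perp(\mathrm L_H)$ and proper observable overgroups of $H$.
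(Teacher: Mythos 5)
Your overall architecture is the paper's: reduce via the observability of reductive subgroups to the case where $\widehat H$ is nonreductive and proper, apply the observability criterion to $\widehat H$ for one implication, and Hilbert--Mumford for the other. But both implications stop exactly at the step that carries the content. In the direction ``not epimorphic $\Rightarrow$ orbit not closed'' you declare the crux --- that $\bar v:=\lim_t\mu(t)\cdot v_{\mathrm R_{\mathrm u}(H)}$ lies outside the orbit --- to be ``the main obstacle'' and leave it unproved. The missing argument is short and you should supply it: if $\bar v=g\cdot v_{\mathrm R_{\mathrm u}(H)}$ for some $g\in\mathrm Z_G^\perp(\mathrm L_H)$, then $\mathrm{Ad}(g)$ carries $\operatorname{lie}\mathrm R_{\mathrm u}(H)$ into $\operatorname{lie}\mathrm L_{\widehat H}$ (the image of $\bar v$), while it fixes $\operatorname{lie}\mathrm L_H\subseteq\operatorname{lie}\mathrm L_{\widehat H}$ pointwise because $g$ centralizes $\mathrm L_H$; hence $\mathrm{Ad}(g)\,\mathfrak h\subseteq\operatorname{lie}\mathrm L_{\widehat H}$, so $H$ lies in a conjugate of $\mathrm L_{\widehat H}$, a proper reductive (hence observable) subgroup. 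That contradicts either your standing hypothesis (no proper reductive overgroup) or, as in the paper, the minimality of the hull together with $\dim\mathrm L_{\widehat H}<\dim\widehat H$. Your intuition about where the hypothesis enters is right, but the proposal does not contain the proof.

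The converse direction has a step that would fail as written. The subgroup $M$ you propose to assemble from $\mathrm L_H$, the nonnegative weight spaces of $\lambda$, and $\mathrm R_{\mathrm u}(H)$ is (or contains) the parabolic $\mathrm P(\lambda)$, and a proper parabolic is epimorphic, never observable; correspondingly, your plan to certify $M$ through Theorem~\ref{th1} cannot succeed, since $\mathrm Z_G^\perp(\mathrm L_{\mathrm P(\lambda)})$ is trivial: any element of its Lie algebra would have to centralize and be Killing-orthogonal to all of $\operatorname{lie}\mathrm Z_G(\lambda)$, on which the form is nondegenerate. The correct proper overgroup is the codimension-one subgroup $\mathrm Q_s\subset\mathrm P_s$ with $\mathfrak q_s=\{p\in\mathfrak p_s:(p,s)=0\}$, and placing $H$ inside it requires precisely the two points you omit: $\mathrm R_{\mathrm u}(H)\subseteq\mathrm Q_s$ because $\mathrm P_s/\mathrm Q_s\cong\mathbb K^*$ and a unipotent group admits no nontrivial homomorphism to $\mathbb K^*$, and $\operatorname{lie}\mathrm L_H\subseteq\mathfrak q_s$ because the destabilizing $s$ lies in $\operatorname{lie}\mathrm Z_G^\perp(\mathrm L_H)$, so it both centralizes and is Killing-orthogonal to $\operatorname{lie}\mathrm L_H$ --- this is the one place where the orthogonality built into the definition of $\mathrm Z_G^\perp$ is indispensable. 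Once $\mathfrak h\subseteq\mathfrak q_s$ for a nonzero rational semisimple $s$, non-epimorphicity is immediate from the corollary to Sukhanov's criterion, with no need to construct $M$ or re-verify observability via Theorem~\ref{th1}.
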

\section{Proofs of the theorems~\ref{th1}~and~\ref{th2}}
\begin{definition}\upshape A semisimple element $s\in\mathfrak g$ is called rational if and only if all $\mathrm{ad}s$-eigenvalues are rational numbers.\end{definition}
Let $s\in\mathfrak g$ be a rational semisimple element. It determines direct sum decomposition $\mathfrak g=\bigoplus\limits_{i\in\mathbb Q}{\mathfrak g_i}$, where $\mathfrak g_i$ is the $i$-eigenspace of ad$s$. Spaces $\mathfrak p_s:=\bigoplus\limits_{i\ge 0}\mathfrak g_i$, $\mathfrak q_s:=\{p\in\mathfrak p_s: (p, s)=0\}$ and $\mathfrak n_s:=\bigoplus\limits_{i>0}\mathfrak g_i$ are algebraic Lie subalgebras of $\mathfrak g$. The Lie algebra $\mathfrak n_s$ is the nilpotent radical of both $\mathfrak p_s$ and $\mathfrak q_s$. Let $Q_s$ be an irreducible algebraic Lie group with the Lie algebra $\mathfrak q_s$.
\begin{theorem}[Sukhanov's criterion~\cite{Gr}]\upshape A subgroup $H$ of $G$ is observable in $G$ if and only if there exists a rational semisimple element $s\in\mathfrak g$ such that $\operatorname{lie}H\subset\mathfrak q_s$ and $\operatorname{lie}\mathrm{~R_u}(H)\subset\mathfrak n_s$.\end{theorem}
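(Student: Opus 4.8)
The plan is to reduce everything to two standard reformulations of observability: a subgroup $H\subset G$ is observable if and only if $G/H$ is quasi-affine, equivalently if and only if $H$ is the exact isotropy group $G_v=\{g:gv=v\}$ of a vector $v$ in a finite-dimensional rational $G$-module $W$. I will also use three classical facts: observability is transitive (if $A\subset B\subset G$ with $B$ observable in $G$ and $A$ observable in $B$, then $A$ is observable in $G$); unipotent subgroups are observable; and reductive subgroups are observable. Throughout write $\mathfrak h=\operatorname{lie}H$, $\mathfrak n=\operatorname{lie}\mathrm R_{\mathrm u}(H)$, and fix the Lie-algebra Levi decomposition $\mathfrak h=\mathfrak l\ltimes\mathfrak n$ with $\mathfrak l=\operatorname{lie}\mathrm L_H$.

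First I would prove sufficiency. Assume a rational semisimple $s$ with $\mathfrak h\subset\mathfrak q_s$ and $\mathfrak n\subset\mathfrak n_s$ is given. The core step is that $Q_s$ itself is observable: the inclusion $Q_s\subset P_s$ has one-dimensional quotient $P_s/Q_s\cong\mathbb G_{\mathrm m}$, cut out by the character $\chi_s\colon p\mapsto(p,s)$, so $G/Q_s$ is the $\mathbb G_{\mathrm m}$-bundle over the flag variety $G/P_s$ associated to the homogeneous line bundle $\mathcal L_{\chi_s}$. Because $s$ has strictly positive $\operatorname{ad}$-eigenvalues on $\mathfrak n_s$, the character $\chi_s$ is strictly $P_s$-dominant, $\mathcal L_{\chi_s}$ is ample (up to orientation), and $G/Q_s$, being the complement of the zero section, is therefore quasi-affine; hence $Q_s$ is observable. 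By transitivity it then suffices to show $H$ is observable inside $Q_s$. Here the hypotheses pay off: $\mathrm R_{\mathrm u}(H)\subset\mathrm R_{\mathrm u}(Q_s)$ has Lie algebra inside $\mathfrak n_s$, while $\mathrm L_H$ is reductive; passing to the reductive quotient $Q_s/\mathrm R_{\mathrm u}(Q_s)$, in which the image of $H$ is reductive and hence observable, and using that $\mathrm R_{\mathrm u}(H)$ is unipotent and hence observable, one assembles observability of $H$ in $Q_s$. The role of the Killing condition $(\mathfrak h,s)=0$ is exactly to place $\mathfrak h$ in $\mathfrak q_s$ rather than in the larger $\mathfrak p_s$; dropping it would let $\mathfrak h$ meet the central line $\mathbb K s$, which is precisely what happens for the Borel subgroup of $\mathrm{SL}_2$.

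For necessity I would start from $H=G_v$ with $v\in W$ and exploit that, $H$ being nonreductive, the orbit $Gv\cong G/H$ is not closed. I would first reduce the problem to finding a rational semisimple $s$ with $[s,\mathfrak l]=0$, with $(s,\mathfrak l)=0$, and with all $\operatorname{ad}s$-eigenvalues on $\mathfrak n$ strictly positive. Given such an $s$, one has $\mathfrak l\subset\mathfrak g_0$ and $\mathfrak n\subset\mathfrak n_s$, so $\mathfrak h\subset\mathfrak p_s$; and since $s\in\mathfrak g_0$ is Killing-orthogonal to every $\mathfrak g_i$ with $i\neq 0$, one has $(\mathfrak n,s)=0$ automatically, while $(\mathfrak l,s)=0$ holds by construction (the semisimple part $[\mathfrak l,\mathfrak l]$ is automatically orthogonal to a centralizing $s$, leaving only the genuine requirement $s\perp\mathfrak z(\mathfrak l)$). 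Thus $(\mathfrak h,s)=0$ and $\mathfrak h\subset\mathfrak q_s$. Notice that this is exactly the condition $s\in\operatorname{lie}\mathrm Z_G^\perp(\mathrm L_H)$ together with strict positivity on $\mathfrak n$, so the reduction already recovers the reformulation underlying Theorems~\ref{th1} and~\ref{th2}. The existence of $s$ I would extract from the quasi-affineness of $Gv$: a Hilbert--Mumford/Kempf optimal one-parameter subgroup, or a Bialynicki--Birula analysis of the boundary $\overline{Gv}\setminus Gv$, yields a one-parameter subgroup $\lambda$ contracting $v$ whose derivative $s=\dot\lambda$ can be steered to centralize $\mathrm L_H$ and to grade $\mathfrak n$ strictly positively.

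The main obstacle is this last step of the necessity direction. Producing a single $s$ that simultaneously (i) centralizes $\mathrm L_H$, (ii) grades $\operatorname{lie}\mathrm R_{\mathrm u}(H)$ by strictly positive numbers, and (iii) is Killing-orthogonal to the center $\mathfrak z(\mathfrak l)$ is delicate, because tightening the orthogonality constraint on the reductive part competes with keeping the unipotent radical in strictly positive weights. This is where observability is used in an essential, non-formal way: the $\mathrm{SL}_2$-Borel example shows that when $\mathfrak z(\mathfrak l)$ cannot be circumvented the three requirements are jointly unsatisfiable and $H$ fails to be observable, so the heart of the argument is to show that quasi-affineness of $G/H$ forces the center of the Levi to impose no such obstruction and allows the contracting $s$ to be placed in the orthogonal centralizer of $\mathrm L_H$.
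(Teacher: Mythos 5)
First, a point of comparison: the paper never proves this statement at all --- it is imported from~\cite{Gr} as Sukhanov's criterion and used as a black box to derive Theorems~\ref{th1} and~\ref{th2} --- so your argument has to stand entirely on its own. Your sufficiency half does stand, and is the standard argument: up to replacing $s$ by a positive integer multiple, $\mathfrak q_s$ is the Lie algebra of the stabilizer $Q_s$ of a highest weight vector, so $G/Q_s$ is the affine cone over $G/P_s$ minus its vertex, hence quasi-affine, and $Q_s$ is observable; then $H\subset HN_s\subset Q_s$ (where $N_s$ is the connected subgroup with Lie algebra $\mathfrak n_s$), the quotient $HN_s/H\cong N_s/\mathrm R_{\mathrm u}(H)$ is an affine space, $Q_s/HN_s$ is affine by Matsushima's criterion because the image of $H$ in $Q_s/N_s$ is reductive, and transitivity of observability finishes; your ``assembly'' step can be made precise exactly along these lines.

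The necessity half, however, contains a genuine gap --- the very one you flag. Your ``reduction'' to producing a rational semisimple $s$ with (i) $[s,\mathfrak l]=0$, (ii) all $\operatorname{ad}s$-eigenvalues on $\mathfrak n$ strictly positive, (iii) $(s,\mathfrak z(\mathfrak l))=0$, is not a reduction: it is equivalent to the theorem being proved. Indeed, given any $s$ with $\mathfrak h\subset\mathfrak q_s$ and $\mathfrak n\subset\mathfrak n_s$, conjugating $s$ by a suitable element of $N_s$ (which preserves $\mathfrak q_s$ and its nilradical $\mathfrak n_s$) places $\mathfrak l$ inside $\mathfrak g_0$ by Mostow's conjugacy theorem, and $(\mathfrak l,s)=0$ is then automatic because $\mathfrak q_s$ is by definition Killing-orthogonal to $s$; the converse implication you checked yourself. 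So (i)--(iii) is precisely the reformulation that the paper \emph{deduces from} Sukhanov's criterion (Corollary~\ref{ObG}, hence Theorem~\ref{th1}); proving it from scratch is a legitimate plan, but it carries the full weight of the theorem, and the tools you invoke do not deliver it. Kempf's optimal destabilizing class for $v$ (with $H=G_v$) gives $H\subset P(\lambda)$, and a fixed-point argument on the optimal class --- a torsor under $\mathrm R_{\mathrm u}(P(\lambda))$ on which the reductive group $\mathrm L_H$ acts --- does let you choose $\lambda$ centralizing $\mathrm L_H$, settling (i). But Kempf's theory only yields \emph{nonnegative} weights on $\mathfrak h$, not strict positivity on $\mathfrak n$, and it gives no control whatsoever over the orthogonality (iii). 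Your own $\mathrm{SL}_2$ example shows that (i) and (ii) alone are satisfiable by a non-observable subgroup (the Borel, with $s$ the coroot), so the entire content of the theorem sits exactly in the part for which no argument is offered. As written, the hard direction is asserted, not proved.
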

\begin{corollary}\upshape A subgroup $H$ of $G$ is epimorphic in $G$ if and only if $\mathfrak h\not\subset\mathfrak q_s$ for any nonzero rational semisimple element $s\in\mathfrak g$.\end{corollary}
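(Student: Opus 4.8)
The plan is to deduce the corollary directly from Sukhanov's criterion by proving the contrapositive equivalence: a nonreductive $H$ fails to be epimorphic (i.e.\ $\widehat H\neq G$) if and only if there exists a \emph{nonzero} rational semisimple $s$ with $\mathfrak h\subseteq\mathfrak q_s$. The point of the reformulation is that the forbidden subgroups $Q_s$ are the universal proper observable subgroups, so the whole statement reduces to fitting $\mathfrak h$ into one of them.

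For the easy implication I would start from such a nonzero $s$. First, $Q_s$ is observable: it satisfies Sukhanov's criterion with the same $s$, since $\operatorname{lie}Q_s=\mathfrak q_s\subseteq\mathfrak q_s$ and $\operatorname{lie}\mathrm R_{\mathrm u}(Q_s)=\mathfrak n_s\subseteq\mathfrak n_s$ (recall $\mathfrak n_s$ is the nilpotent radical of $\mathfrak q_s$). Second, $Q_s$ is proper: since $s$ is a nonzero rational semisimple element one has $(s,s)>0$, so $s\in\mathfrak g_0\setminus\mathfrak q_s$ and $\mathfrak q_s\subsetneq\mathfrak g$. From $\mathfrak h\subseteq\mathfrak q_s=\operatorname{lie}Q_s$ I get $H\subseteq Q_s$, and since $K\mapsto\widehat K$ is monotone with $\widehat{Q_s}=Q_s$ for the observable $Q_s$, it follows that $\widehat H\subseteq\widehat{Q_s}=Q_s\neq G$. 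Hence $H$ is not epimorphic.

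For the converse I would assume $H$ is not epimorphic, so that $F:=\widehat H$ is a proper observable subgroup. Applying Sukhanov's criterion to $F$ produces a rational semisimple $s$ with $\mathfrak h\subseteq\mathfrak f\subseteq\mathfrak q_s$, and the entire difficulty is to arrange $s\neq0$. When $F$ is nonreductive this is automatic, because $\operatorname{lie}\mathrm R_{\mathrm u}(F)\subseteq\mathfrak n_s$ is nonzero and $\mathfrak n_s\neq0$ forces $s\neq0$. The genuinely delicate case is $F$ reductive (which does occur: the Borel of a root-$SL_2$ inside $SL_3$ is nonreductive but has reductive hull $SL_2$). Here I would instead exploit that $H$ itself is nonreductive: by the Borel--Tits theorem $H$ lies in a proper parabolic $P$ with $\operatorname{lie}\mathrm R_{\mathrm u}(H)$ inside the nilradical of $\mathfrak p$, and after conjugating we may assume $\mathfrak l:=\operatorname{lie}\mathrm L_H$ lies in the Levi $\mathfrak g_0$. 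I would then seek $s$ in the centre $\mathfrak z(\mathfrak g_0)$, strictly dominant for $P$ (so that $\mathfrak u\subseteq\mathfrak n_s$, whence $\mathfrak u\perp s$, and $\mathfrak l\subseteq\mathfrak g_0\subseteq\mathfrak p_s$) and orthogonal to $\mathfrak z(\mathfrak l)$; because $[\mathfrak l,\mathfrak l]\perp\mathfrak z(\mathfrak g_0)$ automatically, this last condition is exactly what forces $\mathfrak l\perp s$ and hence $\mathfrak h\perp s$, giving $\mathfrak h\subseteq\mathfrak q_s$.

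The main obstacle is precisely this last step: producing a nonzero, strictly dominant $s\in\mathfrak z(\mathfrak g_0)$ orthogonal to $\mathfrak z(\mathfrak l)$. Nonreductivity of $H$ alone does \emph{not} suffice --- a Borel subgroup of $G$ is nonreductive and epimorphic, and for it no nonzero $s$ exists, since it contains a full Cartan and $s\perp\mathfrak t$ forces $s=0$. Thus the non-epimorphic hypothesis must enter exactly here: it is what guarantees that the linear condition $s\perp\mathfrak z(\mathfrak l)$ is compatible with the open dominant cone in $\mathfrak z(\mathfrak g_0)$, i.e.\ that this cone is not pushed entirely out of the orthogonal complement of $\mathfrak z(\mathfrak l)$. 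Making this compatibility quantitative --- relating the cone cut out by $P$ back to the proper observable subgroups sandwiched between $H$ and $G$ and invoking Sukhanov's criterion once more --- is where I expect the real work to lie.
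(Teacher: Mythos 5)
Your first implication is correct and complete: given a nonzero rational semisimple $s$ with $\mathfrak h\subseteq\mathfrak q_s$, the group $Q_s$ is observable by Sukhanov's criterion applied with that same $s$, it is proper since $(s,s)>0$ in a semisimple $\mathfrak g$, and monotonicity of the observable hull gives $\widehat H\subseteq\widehat{Q_s}=Q_s\neq G$ (strictly you also need $H$ connected to pass from $\mathfrak h\subseteq\mathfrak q_s$ to $H\subseteq Q_s$, but the paper is equally implicit about this). Your second implication is also fine in the case where $\widehat H$ is nonreductive, exactly as you argue. But the case you flag as ``the real work'' --- $\widehat H$ reductive --- is not a hole that more work can fill: the statement is false there, so no completion of your plan exists. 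Take $G=SL_3$ and let $H$ be the Borel subgroup of the \emph{principal} $SO_3\subset SL_3$ (the image of the irreducible $3$-dimensional representation of $SL_2$). Then $H$ is nonreductive, $H$ is epimorphic in $SO_3$, and $SO_3$ is reductive, hence observable, so $\widehat H=SO_3\neq G$ and $H$ is not epimorphic. Yet no nonzero rational semisimple $s$ satisfies $\mathfrak h\subseteq\mathfrak q_s$: if one did, then by your own first implication $SO_3=\widehat H\subseteq Q_s$, so $\mathfrak{so}_3\subseteq\mathfrak q_s\subseteq\mathfrak p_s$; but for $s\neq 0$ the subalgebra $\mathfrak p_s$ is a proper parabolic subalgebra of $\mathfrak{sl}_3$ and therefore stabilizes a proper nonzero subspace of $\mathbb K^3$, while the principal $\mathfrak{so}_3$ acts irreducibly on $\mathbb K^3$ --- a contradiction. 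Your own test example, the Borel of a root $SL_2$, is consistent with the corollary only because that $SL_2$ has a nonzero orthogonal centralizer (take $s=\mathrm{diag}(1,1,-2)$); the principal embedding has none, and this is exactly where your plan of producing a nonzero dominant $s$ orthogonal to $\mathfrak z(\mathfrak l)$ must break down.

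The correct conclusion is that the equivalence needs the same extra clause as Theorem~\ref{th2}: $H$ is epimorphic in $G$ if and only if $\mathfrak h\not\subseteq\mathfrak q_s$ for every nonzero rational semisimple $s$ \emph{and} $H$ is not contained in a proper reductive subgroup of $G$. With that clause your argument closes in two lines: if $H$ is not epimorphic, then either $\widehat H$ is nonreductive, and Sukhanov's criterion applied to $\widehat H$ yields $s\neq 0$ because $0\neq\operatorname{lie}\mathrm R_{\mathrm u}(\widehat H)\subseteq\mathfrak n_s$, or $\widehat H$ is a proper reductive subgroup containing $H$, which the clause forbids; conversely, an epimorphic $H$ lies in no proper reductive subgroup, since reductive subgroups are observable. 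Note finally that the paper itself only ever invokes the corollary in the direction you did prove: in Proposition~\ref{GE} the containment $\operatorname{lie}H\subseteq\mathfrak q_s$ with $s\neq 0$ is produced and then contradicts epimorphy. So the repair above costs nothing downstream, and you should prove the corrected statement rather than keep searching for an argument in the reductive-hull case.
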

\begin{proposition}\label{ChP}\upshape Let $H'\subset H\subset G$ be a chain of reductive algebraic groups. Then $\mathrm Z^\bot_G(H)\subset\mathrm Z^\bot_G(H')$.\end{proposition}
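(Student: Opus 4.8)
The plan is to reduce the asserted inclusion of groups to an inclusion of their Lie algebras, which is immediate, and then to lift that inclusion back to the group level using that both orthogonal centralizers are irreducible and that the characteristic is $0$.

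First I would compare the two defining Lie algebras. Write $\mathfrak h'=\operatorname{lie}H'$ and $\mathfrak h=\operatorname{lie}H$, so that the inclusion $H'\subset H$ yields $\mathfrak h'\subset\mathfrak h$. Now fix $z\in\operatorname{lie}\mathrm Z^\perp_G(H)$, so that $[z,h]=0$ and $(z,h)=0$ for every $h\in\mathfrak h$. Since every element of $\mathfrak h'$ is in particular an element of $\mathfrak h$, these two conditions hold a fortiori for every $h\in\mathfrak h'$, and hence $z\in\operatorname{lie}\mathrm Z^\perp_G(H')$. This establishes
\[
\operatorname{lie}\mathrm Z^\perp_G(H)\subset\operatorname{lie}\mathrm Z^\perp_G(H').
\]
The only content here is that the defining conditions are imposed over the larger set $\mathfrak h$ for $H$ than over $\mathfrak h'$ for $H'$, so they are strictly more restrictive and the inclusion goes the way claimed.

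The remaining step, and the one I expect to carry the actual weight of the argument, is passing from this inclusion of Lie algebras to the inclusion of groups. By definition both $\mathrm Z^\perp_G(H)$ and $\mathrm Z^\perp_G(H')$ are irreducible (equivalently, connected) algebraic subgroups of $G$, and their Lie algebras are algebraic, being the Lie algebras of algebraic groups. In characteristic $0$ the assignment $A\mapsto\operatorname{lie}A$ is an inclusion-preserving bijection between the connected algebraic subgroups of $G$ and the algebraic Lie subalgebras of $\mathfrak g$. I would invoke this as follows: inside $\mathrm Z^\perp_G(H')$ there is a unique connected algebraic subgroup whose Lie algebra equals $\operatorname{lie}\mathrm Z^\perp_G(H)$, and by the uniqueness of the connected algebraic subgroup of $G$ with a prescribed algebraic Lie algebra this subgroup must coincide with $\mathrm Z^\perp_G(H)$ itself. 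Therefore $\mathrm Z^\perp_G(H)\subset\mathrm Z^\perp_G(H')$, as required. Note that only the two defining properties of the orthogonal centralizers—irreducibility and the prescribed Lie algebra—enter, so no further structural analysis of these subgroups is needed.

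The main obstacle is precisely this lifting step. The Lie-algebra computation is trivial, but it only forces the group inclusion because we are in characteristic $0$ and because both centralizers are irreducible: without connectedness the Lie algebra would control merely the identity components, and without characteristic $0$ the correspondence between algebraic Lie subalgebras and connected algebraic subgroups could break down. I would therefore be careful to flag both hypotheses explicitly at the point where the correspondence is applied.
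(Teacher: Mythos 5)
Since the paper leaves this proposition unproved (``A proof left to the reader''), there is no authorial argument to compare with; judged on its own merits, your proof has the right structure and uses the two ingredients any proof must use: the inclusion $\operatorname{lie}\mathrm Z^\perp_G(H)\subset\operatorname{lie}\mathrm Z^\perp_G(H')$, which is immediate because the defining conditions for $H$ are imposed over the larger algebra $\mathfrak h\supset\mathfrak h'$, and the characteristic-zero correspondence between connected (irreducible) algebraic subgroups and their Lie algebras.

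However, your execution of the lifting step is circular as written. You assert that inside $\mathrm Z^\perp_G(H')$ there exists a connected algebraic subgroup with Lie algebra $\operatorname{lie}\mathrm Z^\perp_G(H)$; but the existence of such a subgroup \emph{inside} $\mathrm Z^\perp_G(H')$ is precisely the inclusion $\mathrm Z^\perp_G(H)\subset\mathrm Z^\perp_G(H')$ you are trying to prove, so it cannot be assumed (uniqueness is not the issue). Likewise, ``inclusion-preserving bijection'' only yields the trivial direction $A\subset B\Rightarrow\operatorname{lie}A\subset\operatorname{lie}B$; what you need is that inclusions are \emph{reflected}, and an order-preserving bijection of posets need not have order-preserving inverse. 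The standard way to close the gap is short: for connected algebraic subgroups $A,B\subset G$ over a field of characteristic $0$, the intersection $A\cap B$ is again a smooth algebraic group (Cartier), and
\[
\operatorname{lie}(A\cap B)=\operatorname{lie}A\cap\operatorname{lie}B=\operatorname{lie}A,
\]
so $\dim(A\cap B)=\dim A$; since $A$ is connected, $(A\cap B)^{\circ}=A$, that is, $A\subset B$. Equivalently, you may cite directly the theorem that in characteristic $0$ the map $A\mapsto\operatorname{lie}A$ is an isomorphism of partially ordered sets between connected algebraic subgroups of $G$ and algebraic Lie subalgebras of $\mathfrak g$. With this replacement your proof is complete, and your closing remarks about exactly where connectedness and characteristic $0$ enter are correct.
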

\begin{proof} A proof left to the reader.\end{proof}
\begin{lemma}\upshape Let $H', H$ be subgroups of $G$ and $\mathrm{(a)} H'\subset H$;$\mathrm{(b)}\mathrm{R_u}H'\subset\mathrm{R_u}H;\mathrm {(c)}0\in\overline{\mathrm Z_G^\bot(\mathrm L_H)v_{\mathrm{R_u}H}}$. Then $0\in\overline{\mathrm Z_G^\bot(\mathrm L_{H'})v_{\mathrm{R_u}H'}}$.\end{lemma}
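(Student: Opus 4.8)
The plan is to move hypothesis (c) from $H$ down to $H'$ along a $G$-equivariant linear map between the two $\operatorname{Hom}$-spaces, once the Levi subgroups have been arranged to be nested so that Proposition~\ref{ChP} applies. I expect Steps 2 and 3 below to be essentially formal; the genuine work is concentrated in the normalization of Step 1.

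\emph{Step 1: nesting the Levi subgroups.} Since $\mathrm L_{H'}$ is reductive and, by (a), contained in $H'\subset H$, Mostow's theorem (characteristic $0$) shows that some $\mathrm{R_u}H$-conjugate of $\mathrm L_{H'}$ lies in $\mathrm L_H$; replacing $\mathrm L_H$ by the corresponding conjugate I may assume $\mathrm L_{H'}\subset\mathrm L_H$. I would then verify that (c) is unaffected by this conjugation. The conjugating element $g$ lies in $\mathrm{R_u}H$, hence normalizes $\mathrm{R_u}H$, so the image $\operatorname{lie}\mathrm{R_u}H$ of the injective vector $v_{\mathrm{R_u}H}$ is preserved. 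The clean device here is the elementary fact that, for an injective $w\in\operatorname{Hom}_{\mathbb K}(\mathfrak m,\mathfrak g)$ and any subgroup $Z\subset G$, the condition $0\in\overline{Zw}$ depends only on the image $w(\mathfrak m)$: two injective maps with the same image differ by precomposition with an element of $\operatorname{GL}(\mathfrak m)$, and precomposition is a homeomorphism of $\operatorname{Hom}_{\mathbb K}(\mathfrak m,\mathfrak g)$ that fixes $0$ and commutes with the $Z$-action (postcomposition by $\operatorname{Ad}$). Applying this with $Z=\mathrm Z_G^\perp(g\mathrm L_H g^{-1})$ transports (c) to the new Levi. With $\mathrm L_{H'}\subset\mathrm L_H$ secured, Proposition~\ref{ChP} gives $\mathrm Z_G^\perp(\mathrm L_H)\subset\mathrm Z_G^\perp(\mathrm L_{H'})$.

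\emph{Step 2: the restriction map.} By (b), $\operatorname{lie}\mathrm{R_u}H'\subset\operatorname{lie}\mathrm{R_u}H$, so restriction of homomorphisms defines a linear map $\rho:\operatorname{Hom}_{\mathbb K}(\operatorname{lie}\mathrm{R_u}H,\mathfrak g)\to\operatorname{Hom}_{\mathbb K}(\operatorname{lie}\mathrm{R_u}H',\mathfrak g)$. Because the $G$-action on both spaces is postcomposition by $\operatorname{Ad}$, the map $\rho$ is $G$-equivariant; moreover it carries the inclusion $v_{\mathrm{R_u}H}$ to the inclusion $v_{\mathrm{R_u}H'}$ and sends $0$ to $0$.

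\emph{Step 3: push forward and enlarge.} Being a morphism of affine varieties, $\rho$ satisfies $\rho(\overline S)\subset\overline{\rho(S)}$. Taking $S=\mathrm Z_G^\perp(\mathrm L_H)v_{\mathrm{R_u}H}$ and using equivariance together with $\rho(v_{\mathrm{R_u}H})=v_{\mathrm{R_u}H'}$, hypothesis (c) gives $0=\rho(0)\in\overline{\mathrm Z_G^\perp(\mathrm L_H)v_{\mathrm{R_u}H'}}$. Enlarging the acting group by the inclusion $\mathrm Z_G^\perp(\mathrm L_H)\subset\mathrm Z_G^\perp(\mathrm L_{H'})$ from Step 1 then yields $0\in\overline{\mathrm Z_G^\perp(\mathrm L_{H'})v_{\mathrm{R_u}H'}}$, as claimed. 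The main obstacle is thus entirely in Step 1: showing that the Levi factors can be nested while preserving (c), which forces one to combine the conjugacy of Levi factors with the conjugation-invariance of the image-only orbit condition.
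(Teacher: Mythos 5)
Your proof is correct, but there is no paper proof to compare it with: the paper's own proof of this lemma is literally ``A proof left to the reader,'' so your argument is a genuine fill-in of the gap rather than a variant of an existing argument. The three ingredients you assemble are exactly what the statement requires: Mostow's theorem to replace $\mathrm L_H$ by an $\mathrm{R_u}H$-conjugate Levi containing $\mathrm L_{H'}$ (note that conjugating $\mathrm L_H$ rather than $\mathrm L_{H'}$ is the right move, since a conjugate of $\mathrm L_{H'}$ by $g\in\mathrm{R_u}H$ need not be a Levi subgroup of $H'$), the $G$-equivariant restriction map $\rho$, and Proposition~\ref{ChP} applied to the chain $\mathrm L_{H'}\subset\mathrm L_H\subset G$; Steps 2 and 3 are indeed formal. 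The one point you should write out in Step 1 is the identity $\mathrm Z_G^\perp\bigl(g\mathrm L_Hg^{-1}\bigr)=g\,\mathrm Z_G^\perp(\mathrm L_H)\,g^{-1}$, which follows from the $\operatorname{Ad}$-invariance of the Cartan--Killing form and of the bracket: your ``image-only'' device compares orbits of $v_{\mathrm{R_u}H}$ and $g\cdot v_{\mathrm{R_u}H}$ under one fixed group, but to transport hypothesis (c) to the new Levi you also need that the orbit of $g\cdot v_{\mathrm{R_u}H}$ under the new centralizer equals the $g$-translate of the orbit of $v_{\mathrm{R_u}H}$ under the old one, and that translation by $g$ is a homeomorphism fixing $0$. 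With that sentence added the argument is complete; as a bonus, your Step 1 shows that condition (c) does not depend on the choice of Levi subgroup, which is needed for the lemma (and for Theorems~\ref{th1} and~\ref{th2}) to be well posed in the first place.
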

\begin{proof}A proof left to the reader.\end{proof}
\begin{proposition}\upshape Let $s\in\mathfrak g$ be a rational semisimple element. Then $0\in\overline{\mathrm Z_G^\bot(\mathrm L_{\mathrm Q_s})v_{\mathrm{R_u}Q_s}}$.\end{proposition}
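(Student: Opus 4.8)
The plan is to produce a single one-parameter subgroup sitting inside $\mathrm{Z}_G^\perp(\mathrm{L}_{Q_s})$ whose action already drags $v_{\mathrm{R_u}Q_s}$ to $0$, and the obvious candidate is the cocharacter attached to $s$ itself. So first I would make the Levi decomposition of $\mathfrak q_s$ explicit. Since $\mathfrak g_0=\ker(\operatorname{ad}s)$ is the centralizer of $s$ and the Cartan--Killing form pairs $\mathfrak g_i$ with $\mathfrak g_{-i}$, for $p=\sum_i p_i\in\mathfrak p_s$ one has $(p,s)=(p_0,s)$, so the condition $(p,s)=0$ constrains only the $\mathfrak g_0$-component. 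Hence $\mathfrak q_s=\mathfrak q_0\oplus\mathfrak n_s$ with $\mathfrak q_0:=\mathfrak g_0\cap s^\perp$. This $\mathfrak q_0$ is reductive, being the sum of $[\mathfrak g_0,\mathfrak g_0]$ with a subspace of the center of $\mathfrak g_0$, and since $\mathfrak n_s$ is the nilpotent radical of $\mathfrak q_s$, the algebra $\mathfrak q_0$ is a Levi complement. I may therefore take $\operatorname{lie}\mathrm{L}_{Q_s}=\mathfrak q_0$ and $\operatorname{lie}\mathrm{R_u}Q_s=\mathfrak n_s$, so that $v_{\mathrm{R_u}Q_s}$ is the inclusion $\mathfrak n_s\hookrightarrow\mathfrak g$.

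Next I would check that $s\in\operatorname{lie}\mathrm{Z}_G^\perp(\mathrm{L}_{Q_s})$. Both defining conditions are immediate: $[s,\mathfrak q_0]=0$ because $\mathfrak q_0\subseteq\mathfrak g_0=\ker(\operatorname{ad}s)$, and $(s,\mathfrak q_0)=0$ because $\mathfrak q_0\subseteq s^\perp$ by its very definition. I would also note in passing that the right-hand side of the defining condition is genuinely a subalgebra (if $z,z'$ centralize and are orthogonal to $\mathfrak q_0$, then $([z,z'],h)=(z,[z',h])=0$), which is what makes $\mathrm{Z}_G^\perp(\mathrm{L}_{Q_s})$ well defined here.

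Then I would pass from the Lie algebra to the group. Because $\mathrm{Z}_G^\perp(\mathrm{L}_{Q_s})$ is irreducible, hence connected, and closed, and its Lie algebra contains the rational semisimple element $s$, the cocharacter $\gamma\colon\mathbb G_m\to G$ determined by $s$ (after scaling so that the $\operatorname{ad}s$-eigenvalues are integers, so that $\gamma(t)$ acts on $\mathfrak g_i$ by $t^i$) factors through $\mathrm{Z}_G^\perp(\mathrm{L}_{Q_s})$, by the uniqueness of the one-parameter subgroup with derivative $s$. Acting on $v_{\mathrm{R_u}Q_s}$ by $\gamma(t)$ post-composes the inclusion with $\operatorname{Ad}\gamma(t)$, scaling the $\mathfrak g_i$-component by $t^i$; since every index $i$ occurring in $\mathfrak n_s$ is strictly positive, $\gamma(t)\cdot v_{\mathrm{R_u}Q_s}\to 0$ as $t\to 0$. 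Thus $0\in\overline{\gamma(\mathbb G_m)\,v_{\mathrm{R_u}Q_s}}\subseteq\overline{\mathrm{Z}_G^\perp(\mathrm{L}_{Q_s})\,v_{\mathrm{R_u}Q_s}}$, which is the claim.

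The content of the argument is almost entirely the eigenspace bookkeeping of the first two paragraphs; once $s$ is shown to lie in the orthogonal centralizer, the contraction is forced. I expect the only step deserving a careful sentence to be the passage from the membership $s\in\operatorname{lie}\mathrm{Z}_G^\perp(\mathrm{L}_{Q_s})$ to the containment of the whole torus $\gamma(\mathbb G_m)$ in the connected group $\mathrm{Z}_G^\perp(\mathrm{L}_{Q_s})$, and a small verification that $\mathfrak q_0$ really is the Levi and $\mathfrak n_s$ the unipotent radical; neither of these is a serious obstacle.
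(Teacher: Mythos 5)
Your proof is correct and takes essentially the same approach as the paper: the paper's entire proof is the two-line observation that the one-parameter subgroup $\bar s$ attached to $s$ contracts $v_{\mathrm{R_u}Q_s}$ to $0$ inside the $\mathrm Z_G^\perp(\mathrm L_{Q_s})$-orbit closure. The extra work you do --- the explicit Levi decomposition $\mathfrak q_s=\mathfrak q_0\oplus\mathfrak n_s$, the check that $s\in\operatorname{lie}\mathrm Z_G^\perp(\mathrm L_{Q_s})$, and the passage from this Lie algebra membership to containment of the cocharacter's image in the connected group --- is precisely the verification the paper leaves implicit, so your write-up is a fleshed-out version of the same argument.
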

\begin{proof} Let $\bar s:\mathbb K^*\to G$ be a 1-parametric subgroup corresponding to $s$. Then $\lim\limits_{t\to\infty} \bar s(t)v_{\mathrm R_uQ_s}$ equals 0.\end{proof}
\begin{corollary}\label{ObG}\upshape Let $H$ be an observable subgroup of $G$. Then $0\in\overline{\mathrm Z_G^\bot(\mathrm L_H)v_{\mathrm{R_u}H}}$.\end{corollary}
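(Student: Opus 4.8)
The plan is to obtain the corollary by chaining together Sukhanov's criterion, the preceding Proposition, and the Lemma. First I would feed the observable subgroup $H$ into Sukhanov's criterion: since $H$ is observable, this yields a rational semisimple element $s\in\mathfrak g$ with $\operatorname{lie}H\subset\mathfrak q_s$ and $\operatorname{lie}\mathrm R_u H\subset\mathfrak n_s$. This choice of $s$ is the only substantive input into the argument; everything that follows is bookkeeping aimed at putting the hypotheses of the Lemma in place.

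Next I would promote these Lie-algebra inclusions to inclusions of groups, so as to apply the Lemma with $Q_s$ in the role of its larger group $H$ and our observable subgroup in the role of its subgroup $H'$. From $\operatorname{lie}H\subset\mathfrak q_s=\operatorname{lie}Q_s$ I would deduce $H\subset Q_s$, which is hypothesis (a). From $\operatorname{lie}\mathrm R_u H\subset\mathfrak n_s=\operatorname{lie}\mathrm R_u Q_s$ — using that $\mathfrak n_s$ is the nilpotent radical of $\mathfrak q_s$, as recorded in the text — I would deduce $\mathrm R_u H\subset\mathrm R_u Q_s$, which is hypothesis (b).

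For hypothesis (c), namely $0\in\overline{\mathrm Z_G^\perp(\mathrm L_{Q_s})v_{\mathrm R_u Q_s}}$, I would simply quote the immediately preceding Proposition, which asserts exactly this statement for the group $Q_s$ attached to any rational semisimple $s$. With (a), (b), and (c) all verified, the Lemma returns $0\in\overline{\mathrm Z_G^\perp(\mathrm L_H)v_{\mathrm R_u H}}$, which is the assertion of the corollary.

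I expect the only genuine obstacle to be the passage from the Lie-algebra inclusions to the group inclusions $H\subset Q_s$ and $\mathrm R_u H\subset\mathrm R_u Q_s$: the first requires reducing to the case of connected $H$ (or choosing $Q_s$ to contain the relevant components), while the second requires identifying $\mathrm R_u Q_s$ with the connected unipotent group integrating $\mathfrak n_s$, which in characteristic $0$ is unambiguous once $\mathfrak n_s$ is recognized as the nilpotent radical of $\mathfrak q_s$. Both points are standard, so the real content of the proof lies entirely in the selection of $s$ by Sukhanov's criterion and the observation that $Q_s$ is the correct ambient group to which both the Lemma and the preceding Proposition apply.
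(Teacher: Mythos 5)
Your proposal is correct and is precisely the argument the paper intends: the Corollary carries no written proof because it is meant to follow by feeding Sukhanov's criterion into the Lemma (with $Q_s$ as the larger group and the observable $H$ as the subgroup), with hypothesis (c) supplied by the immediately preceding Proposition on $Q_s$. Your handling of the group-versus-Lie-algebra inclusions, including the identification of $\mathrm R_{\mathrm u}Q_s$ with the connected group integrating $\mathfrak n_s$, fills in exactly the routine details the paper leaves implicit.
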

\begin{proposition}\label{GOb}\upshape Let $H$ be a subgroup of $G$. Suppose $0\in\overline{\mathrm Z_G^\bot(\mathrm L_H)v_{\mathrm{R_u}H}}$. Then $H$ is observable in $G$.\end{proposition}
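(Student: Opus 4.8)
The plan is to convert the orbit-closure hypothesis into a rational semisimple element and then quote Sukhanov's criterion. Write $\mathfrak l=\operatorname{lie}\mathrm L_H$ and $\mathfrak u=\operatorname{lie}\mathrm R_uH$, so that $\operatorname{lie}H=\mathfrak l\oplus\mathfrak u$ by the Levi decomposition, and recall that $v_{\mathrm R_uH}\in\mathrm{Hom}_{\mathbb K}(\mathfrak u,\mathfrak g)$ is the inclusion $\mathfrak u\hookrightarrow\mathfrak g$, with $G$ acting by $g\cdot\phi=\mathrm{Ad}(g)\circ\phi$.

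First I would check that $\mathrm Z_G^\perp(\mathrm L_H)$ is reductive, since this is what licenses the Hilbert--Mumford criterion. The ordinary centralizer $\mathfrak z:=\{z\in\mathfrak g:[z,\mathfrak l]=0\}$ is reductive, and by invariance of the Killing form every $z\in\mathfrak z$ is automatically orthogonal to $[\mathfrak l,\mathfrak l]$; hence $\operatorname{lie}\mathrm Z_G^\perp(\mathrm L_H)=\{z\in\mathfrak z:(z,\mathfrak z(\mathfrak l))=0\}$ is the Killing-orthogonal complement inside $\mathfrak z$ of the central subtorus $\mathfrak z(\mathfrak l)$, which is again reductive. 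Granting this, the point $0$ is a fixed point of the action on $\mathrm{Hom}_{\mathbb K}(\mathfrak u,\mathfrak g)$ and lies in the closure of the orbit $\mathrm Z_G^\perp(\mathrm L_H)v_{\mathrm R_uH}$ by hypothesis, so Hilbert--Mumford produces a one-parameter subgroup $\lambda\colon\mathbb K^*\to\mathrm Z_G^\perp(\mathrm L_H)$ with $\lim_{t\to 0}\lambda(t)v_{\mathrm R_uH}=0$.

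Let $s$ be the velocity of $\lambda$. As $\lambda$ is algebraic, $\mathrm{ad}\,s$ has integer eigenvalues, so $s$ is rational semisimple and produces the grading $\mathfrak g=\bigoplus_i\mathfrak g_i$ together with $\mathfrak p_s,\mathfrak q_s,\mathfrak n_s$. Since $\mathrm{Ad}(\lambda(t))$ scales $\mathfrak g_i$ by $t^i$, the vanishing of the limit forces the image $\mathfrak u$ of $v_{\mathrm R_uH}$ into the sum of the eigenspaces that are contracted to $0$, i.e. $\mathfrak u\subset\mathfrak n_s$; this is the first Sukhanov condition.

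The orthogonal-centralizer hypothesis supplies the second. Because $\lambda$ lands in $\mathrm Z_G^\perp(\mathrm L_H)$, its velocity satisfies $s\in\operatorname{lie}\mathrm Z_G^\perp(\mathrm L_H)$, so $[s,\mathfrak l]=0$ and $(s,\mathfrak l)=0$ by the defining conditions. The first relation gives $\mathfrak l\subset\mathfrak g_0\subset\mathfrak p_s$, and combined with the second it gives $\mathfrak l\subset\mathfrak q_s$. As every eigenvector of nonzero weight is Killing-orthogonal to $s\in\mathfrak g_0$, we also have $\mathfrak n_s\subset\mathfrak q_s$ and hence $\mathfrak u\subset\mathfrak q_s$. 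Thus $\operatorname{lie}H=\mathfrak l+\mathfrak u\subset\mathfrak q_s$ and $\operatorname{lie}\mathrm R_uH=\mathfrak u\subset\mathfrak n_s$, and Sukhanov's criterion gives that $H$ is observable. The main obstacle is the first step: justifying reductivity of $\mathrm Z_G^\perp(\mathrm L_H)$ so that Hilbert--Mumford applies, and fixing the weight sign conventions so that the limit genuinely places $\mathfrak u$ in $\mathfrak n_s$ rather than in $\bigoplus_{i<0}\mathfrak g_i$.
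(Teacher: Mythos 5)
Your proof is correct and follows essentially the same route as the paper's: the Hilbert--Mumford criterion applied to the $\mathrm Z_G^\perp(\mathrm L_H)$-action produces a rational semisimple $s\in\operatorname{lie}\mathrm Z_G^\perp(\mathrm L_H)$ whose positive weight spaces force $\operatorname{lie}\mathrm R_{\mathrm u}H\subset\mathfrak n_s$, and Sukhanov's criterion then gives observability. You additionally spell out two points the paper leaves implicit --- the reductivity of $\mathrm Z_G^\perp(\mathrm L_H)$ needed to invoke Hilbert--Mumford, and the deduction $\operatorname{lie}H\subset\mathfrak q_s$ from $[s,\mathfrak l]=0$, $(s,\mathfrak l)=0$ and $\mathfrak n_s\subset\mathfrak q_s$ --- which strengthens rather than changes the argument.
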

\begin{proof} Let $s$ be a rational semisimple element and $V$ be a $G$-module. Then $V=\bigoplus\limits_{i\in\mathbb Q}V_i^s$, where $V_i^s$ is the $i$-eigenspace of $s$.

There exists a nonzero rational semisimple element $s\in\mathfrak g$ such that $v_{\mathrm{R_u}H}\in\bigoplus\limits_{i>0}$Hom$_\mathbb K(\operatorname{lie}\mathrm{R_u}H, \mathfrak g)_i^s$ (Hilbert-Mumford criterion). Hence $\bigoplus\limits_{i>0}\operatorname{Hom}(\operatorname{lie}\mathrm{R_u}H, \mathfrak g)_i^s=\operatorname{Hom}(\operatorname{lie}\mathrm{R_u}H, \mathfrak n_s)$, $\operatorname{lie}$~$H$ contained in $\mathfrak n_s$. Therefore $\operatorname{lie}H\subset q_s$ and hence $\operatorname{lie}$~R$_\mathrm uH\subset\mathfrak n_s$, the subgroup $H\subset G$ is observable in $G$.\end{proof}
Corollary~\ref{ObG} and Proposition~\ref{GOb} are equivalent to Theorem~\ref{th1}.
\begin{proposition}\upshape \label{EG} Let $H$ be a subgroup of $G$ and suppose $\widehat H$ is not reductive. Then the $\mathrm Z_G^\bot(\mathrm L_H)$-orbit of $v_{\mathrm R_u H}$ is not closed.\end{proposition}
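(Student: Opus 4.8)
The plan is to exhibit a point in the closure of the orbit $O:=\mathrm Z_G^\perp(\mathrm L_H)v_{\mathrm R_u H}$ that lies outside $O$. First I would record two reductions. A reductive subgroup is observable (Sukhanov's criterion with $s=0$, where $\mathfrak q_0=\mathfrak g$ and $\mathfrak n_0=0$), so if $H$ were reductive then $\widehat H=H$ would be reductive, against the hypothesis; hence $H$ is nonreductive and $v_{\mathrm R_u H}\ne0$. If in addition $H$ is observable, then $\widehat H=H$ and Corollary~\ref{ObG} gives $0\in\overline O$, while $0\notin O$ since a linear action cannot carry a nonzero vector to $0$; thus $O$ is not closed. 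It remains to treat the case where $H$ is nonreductive and not observable while $\widehat H$ is nonreductive.

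Next I would extract a cocharacter from the hull. As $\widehat H$ is observable and nonreductive, Sukhanov's criterion provides a rational semisimple $s$ with $\operatorname{lie}\widehat H\subseteq\mathfrak q_s$ and $\operatorname{lie}\mathrm R_u\widehat H\subseteq\mathfrak n_s$; since $\mathrm R_u\widehat H\ne\{e\}$ we have $\mathfrak n_s\ne0$, so $s\ne0$. The subalgebra $\operatorname{lie}\mathrm L_H$ is reductive and contained in $\mathfrak q_s$, so after conjugating $H$ by a suitable element of $\mathrm R_u Q_s$ I may assume $\operatorname{lie}\mathrm L_H\subseteq\mathfrak m:=\{x\in\mathfrak g_0:(x,s)=0\}$. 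Then $[s,\operatorname{lie}\mathrm L_H]=0$ and $(s,\operatorname{lie}\mathrm L_H)=0$, i.e. $s\in\operatorname{lie}\mathrm Z_G^\perp(\mathrm L_H)$, so the associated one-parameter subgroup $\bar s$ lies in $\mathrm Z_G^\perp(\mathrm L_H)$.

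Now I would degenerate $v_{\mathrm R_u H}$ along $\bar s$. Because $\operatorname{lie}\mathrm R_u H\subseteq\mathfrak h\subseteq\mathfrak q_s\subseteq\mathfrak p_s=\bigoplus_{i\ge0}\mathfrak g_i$, each $x\in\operatorname{lie}\mathrm R_u H$ has only nonnegative $\operatorname{ad}s$-weights, so $\lim_{t\to0}\operatorname{Ad}(\bar s(t))x$ exists and equals the $\mathfrak g_0$-component $x_0$ of $x$. Hence $w:=\lim_{t\to0}\bar s(t)v_{\mathrm R_u H}$ exists, equals $\mathrm{pr}_0\circ v_{\mathrm R_u H}$ with $\mathrm{pr}_0\colon\mathfrak p_s\to\mathfrak g_0$ the projection along $\mathfrak n_s$, and lies in $\overline O$. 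Every element of $O$ is an injective map $\operatorname{lie}\mathrm R_u H\to\mathfrak g$, since $\mathrm Z_G^\perp(\mathrm L_H)$ acts by post-composition with the invertible operators $\operatorname{Ad}(g)$, whereas the kernel of $w$ is exactly $\operatorname{lie}\mathrm R_u H\cap\mathfrak n_s$. So it suffices to prove $\operatorname{lie}\mathrm R_u H\cap\mathfrak n_s\ne0$: then $w$ is non-injective, $w\in\overline O\setminus O$, and $O$ is not closed.

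The hard part will be exactly this nonvanishing of $\operatorname{lie}\mathrm R_u H\cap\mathfrak n_s$, which I would derive from the structural fact that $\mathrm R_u\widehat H\subseteq H$. Given this, $\mathrm R_u\widehat H$ is connected, unipotent and normal in $\widehat H$, hence normal in $H$ and therefore contained in the maximal connected normal unipotent subgroup $\mathrm R_u H$; together with $\operatorname{lie}\mathrm R_u\widehat H\subseteq\mathfrak n_s$ this yields $0\ne\operatorname{lie}\mathrm R_u\widehat H\subseteq\operatorname{lie}\mathrm R_u H\cap\mathfrak n_s$, as needed. Thus the crux is that passing to the observable hull enlarges only the reductive part: the image of $H$ in the Levi quotient $\widehat H/\mathrm R_u\widehat H$ is a proper epimorphic, hence nonreductive, subgroup, while $\mathrm R_u\widehat H$ already sits inside $\mathrm R_u H$. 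This is the one step where the structure theory of observable subgroups, rather than formal manipulation, is genuinely needed; the tempting alternative of assuming $O$ closed and writing $w=\operatorname{Ad}(g)\circ v_{\mathrm R_u H}$ to force $gHg^{-1}\subseteq\mathrm Z_G(s)^\circ$ stalls, since such a $g$ need not preserve $\mathfrak n_s$ and so cannot by itself collapse $\mathrm R_u\widehat H$.
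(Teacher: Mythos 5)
Everything up to your last paragraph is sound and runs parallel to the paper's own argument: Sukhanov's criterion applied to $\widehat H$, the conjugation arranging $s\in\operatorname{lie}\mathrm Z_G^\perp(\mathrm L_H)$, the existence of the limit $w=\lim_{t\to0}\bar s(t)v_{\mathrm R_uH}$, and the observations that orbit elements are injective while $\ker w=\operatorname{lie}\mathrm R_uH\cap\mathfrak n_s$. The gap is the ``structural fact'' $\mathrm R_u\widehat H\subseteq H$ on which the final step rests: it is false. Concretely, let $G=SL_3$ and $Q=\{g\in SL_3:(0,0,1)g=(0,0,1)\}\cong SL_2\ltimes\mathbb K^2$, which is observable in $G$ (it is $Q_s$ for $s=\operatorname{diag}(1,1,-2)$), and let $H=B_2\ltimes L\subset Q$, where $B_2\subset SL_2$ is the upper-triangular Borel and $L=\mathbb Ke_1$ is the $B_2$-stable line; so $H$ consists of the matrices $\left(\begin{smallmatrix}a&x&w\\0&a^{-1}&0\\0&0&1\end{smallmatrix}\right)$. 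Writing elements of $Q$ as pairs $(A,b)$, any $f\in\mathbb K[Q]^H$ is, by invariance under the $L$-translations, of the form $\sum_n a_n(A)\omega^n$ with $\omega(A,b)=\det(Ae_1\mid b)$; since the first column of $Ac$, $c\in B_2$, is $\beta$ times the first column of $A$ ($\beta$ the torus character of $c$), invariance under $B_2$ forces $a_n(Ac)=\beta^{-n}a_n(A)$. But every right $B_2$-semi-invariant in $\mathbb K[SL_2]$ is right $U$-invariant, hence a polynomial in the first column of $A$, homogeneous of degree equal to its weight, so it has nonnegative weight; thus $a_n=0$ for $n\ge1$, and $a_0\in\mathbb K[SL_2]^{B_2}=\mathbb K$ since $SL_2/B_2$ is complete. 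Hence $\mathbb K[Q]^H=\mathbb K$, i.e.\ $H$ is epimorphic in $Q$, so the observable hull of $H$ in $SL_3$ is exactly $Q$, which is nonreductive — and yet $\mathrm R_u\widehat H=\mathbb K^2\not\subseteq H$. Passing to the hull does not merely ``enlarge the reductive part.''

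In this example your final conclusion survives by luck, since $\operatorname{lie}\mathrm R_uH\cap\mathfrak n_s=L\neq0$; but the intersection statement your argument needs is itself false in general, so the kernel test — not just your justification of it — is inadequate. For instance, $P=T\cdot\mathrm R_uP_1\subset SL_3$ ($T$ a maximal torus, $P_1$ the stabilizer of a line) is epimorphic in $SL_3$, and a weight/Euler-characteristic computation gives $H^1(P,V)\neq0$ for $V=\operatorname{Sym}^3(\mathbb K^3)^*$; taking $H\subset SL_3\ltimes V$ to be the graph of an algebraic cocycle $P\to V$ with nonzero class, a degree-by-degree obstruction argument of the same kind as above shows $H$ is epimorphic in $SL_3\ltimes V$, so (embedding $SL_3\ltimes V$ observably in a reductive group by the covector-stabilizer trick) $\widehat H=SL_3\ltimes V$ while $\operatorname{lie}\mathrm R_uH\cap\operatorname{lie}\mathrm R_u\widehat H=0$: here the limit $w$ is injective. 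The repair is precisely the alternative you dismissed in your last sentence, and it is the paper's proof: conjugate so that $\operatorname{lie}\mathrm L_{\widehat H}\subseteq\mathfrak g_0$ (not merely $\operatorname{lie}\mathrm L_H$). Then $w$ has \emph{image} inside $\operatorname{lie}\mathrm L_{\widehat H}$, so an equality $w=\operatorname{Ad}(g)\circ v_{\mathrm R_uH}$ with $g$ centralizing $\operatorname{lie}\mathrm L_H$ would place $\operatorname{Ad}(g)\mathfrak h$ inside the reductive, hence observable, subalgebra $\operatorname{lie}\mathrm L_{\widehat H}$; by minimality of the hull this forces $\widehat H$ into a conjugate of $\mathrm L_{\widehat H}$, giving the contradiction $\dim\widehat H\le\dim\mathrm L_{\widehat H}<\dim\widehat H$. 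No control over what $g$ does to $\mathfrak n_s$ is needed: one exploits the image of $w$, which is constrained, rather than its kernel, which is not.
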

\begin{proof} Hence $\widehat H$ is not reductive, there exists a nonzero rational semisimple element $s\in\mathfrak g$ such that $\operatorname{lie}\widehat H\subset\mathfrak q_s$ and $\operatorname{lie}~$R$_\mathrm u\widehat H\subset\mathfrak n_s$. Without loss of generality we assume that $\operatorname{lie}$~L$_{\widehat H}\subset\mathfrak g_0^s$. Let $\bar s$ be a corresponding to $s$ homomorphism $\mathbb K^*\to G$. Then limit $\bar v:=\lim\limits_{t\to\infty}\bar s(t)v_{\mathrm{R_u}H}$ exists and lies in Hom$_\mathbb K(\operatorname{lie}\mathrm{R_u}H,~\operatorname{lie}$~L$_{\widehat H})$. If $\bar v\in\mathrm Z_G^\bot(\mathrm L_H)v_{\mathrm{R_u}H}$ then $\mathfrak h$ contained in a subalgebra conjugated to $\operatorname{lie}$~L$_{\widehat H}$. Obviously dim~L$_{\widehat H}<$dim$\widehat H$. Contradiction. Therefore $\bar v\in\overline{\mathrm Z_G^\bot(\mathrm L_H)v_{\mathrm{R_uH}}}$ and $\bar v\notin\mathrm Z_G^\bot(\mathrm L_H)v_{\mathrm{R_uH}}$.\end{proof}
\begin{proposition}\upshape \label{GE} Let $H$ be a proper epimorphic subgroup of $G$. Then the $\mathrm Z_G^\bot(\mathrm L_H)$-orbit of $v_{\mathrm R_uH}$ is closed.\end{proposition}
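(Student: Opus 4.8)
The plan is to argue by contradiction via the Hilbert--Mumford--Kempf criterion, running the argument of Proposition~\ref{GOb} in reverse. Set $\mathfrak r:=\operatorname{lie}\mathrm R_uH$ and $\mathfrak l_H:=\operatorname{lie}\mathrm L_H$. Note first that a proper epimorphic $H$ is automatically nonreductive: a reductive subgroup is observable, so $\widehat H=H$, while epimorphicity gives $\widehat H=G$, forcing $H=G$ against properness; hence $\mathfrak r\neq 0$ and $v_{\mathrm R_uH}\neq 0$. Now suppose, contrary to the claim, that $O:=\mathrm Z_G^\perp(\mathrm L_H)v_{\mathrm R_uH}$ is not closed. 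I would first record that $\mathrm Z_G^\perp(\mathrm L_H)$ is reductive: since $(z,[\mathfrak l_H,\mathfrak l_H])=0$ automatically for $z\in\mathfrak z_\mathfrak g(\mathfrak l_H)$, its Lie algebra is $\mathfrak z_\mathfrak g(\mathfrak l_H)\cap\mathfrak z(\mathfrak l_H)^\perp$, namely the derived subalgebra of the reductive algebra $\mathfrak z_\mathfrak g(\mathfrak l_H)$ together with the Killing-orthogonal complement of $\mathfrak z(\mathfrak l_H)$ inside the center of $\mathfrak z_\mathfrak g(\mathfrak l_H)$, which is again reductive. This legitimizes applying the Hilbert--Mumford--Kempf theorem to the linear action on $\operatorname{Hom}_\mathbb K(\mathfrak r,\mathfrak g)$.

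Since $O$ is not closed, that theorem produces a one-parameter subgroup $\bar s$ of $\mathrm Z_G^\perp(\mathrm L_H)$ for which $\bar v:=\lim_{t\to\infty}\bar s(t)v_{\mathrm R_uH}$ exists and lies outside $O$; the corresponding $s\in\operatorname{lie}\mathrm Z_G^\perp(\mathrm L_H)$ is a nonzero rational semisimple element (nonzero because $\bar v\notin O$, so $\bar s$ is nontrivial). Exactly as in the translation used in Proposition~\ref{GOb}, existence of the limit means $\mathfrak r\subset\mathfrak p_s=\bigoplus_{i\ge 0}\mathfrak g_i^s$.

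The heart of the argument is then to show $\mathfrak h\subset\mathfrak q_s$, which contradicts the epimorphicity criterion (the corollary to Sukhanov's criterion). Split $\mathfrak h=\mathfrak l_H\oplus\mathfrak r$. Because $s\in\operatorname{lie}\mathrm Z_G^\perp(\mathrm L_H)$ commutes with $\mathfrak l_H$, one has $\mathfrak l_H\subset\mathfrak g_0^s\subset\mathfrak p_s$, and since $s$ is Killing-orthogonal to $\mathfrak l_H$ we get $(\mathfrak l_H,s)=0$; hence $\mathfrak l_H\subset\mathfrak q_s$. For $\mathfrak r$ we already have $\mathfrak r\subset\mathfrak p_s$, so it remains to verify $(\mathfrak r,s)=0$. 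This is where I expect the only genuine subtlety. An element $x\in\mathfrak r$ is nilpotent but need not be $\operatorname{ad}s$-homogeneous; however, since $(\mathfrak g_i^s,s)=0$ for $i\neq 0$, the pairing reduces to $(x,s)=(x_0,s)$, where $x_0$ is the $\operatorname{ad}s$-degree-zero component of $x$. The point is that $x_0=\lim_{t\to\infty}\operatorname{Ad}(\bar s(t))x$ is a limit of nilpotent elements $\operatorname{Ad}(\bar s(t))x$, hence nilpotent, and lies in $\mathfrak g_0^s=\mathfrak z_\mathfrak g(s)$; a nilpotent element of the reductive algebra $\mathfrak z_\mathfrak g(s)$ lies in $[\mathfrak z_\mathfrak g(s),\mathfrak z_\mathfrak g(s)]$, which is orthogonal to the central element $s$. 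Thus $(x,s)=(x_0,s)=0$ and $\mathfrak r\subset\mathfrak q_s$.

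Combining the two inclusions gives $\mathfrak h\subset\mathfrak q_s$ for a nonzero rational semisimple $s$, contradicting the assumption that $H$ is epimorphic; therefore $O$ is closed. The main obstacle, as flagged above, is the verification $(\mathfrak r,s)=0$ in the case that $\mathfrak r$ meets $\mathfrak g_0^s$ nontrivially (the analogous step for $0\in\overline O$ in Proposition~\ref{GOb} was easier because there $\mathfrak r\subset\mathfrak n_s$ with strictly positive weights); everything else is a direct transcription of the grading bookkeeping already present in the proofs of Proposition~\ref{GOb} and Proposition~\ref{EG}.
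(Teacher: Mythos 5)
Your proof is correct, and its skeleton — argue by contradiction, apply Hilbert--Mumford to the $\mathrm Z_G^\perp(\mathrm L_H)$-action on $\operatorname{Hom}_{\mathbb K}(\operatorname{lie}\mathrm R_{\mathrm u}H,\mathfrak g)$ to get a nonzero rational semisimple $s$ with $\operatorname{lie}\mathrm R_{\mathrm u}H\subset\mathfrak p_s$, then show $\mathfrak h\subset\mathfrak q_s$ and contradict the corollary to Sukhanov's criterion — is exactly the paper's; but the step you rightly flag as the crux, namely $(\operatorname{lie}\mathrm R_{\mathrm u}H,s)=0$, is handled by a genuinely different mechanism. The paper stays at the group level: from $\operatorname{lie}\mathrm R_{\mathrm u}H\subset\mathfrak p_s$ it passes to $\mathrm R_{\mathrm u}H\subset P_s$, notes $P_s/Q_s\cong\mathbb K^*$, and uses the fact that a unipotent group admits no nontrivial homomorphism to $\mathbb K^*$ to conclude $\mathrm R_{\mathrm u}H\subset Q_s$, i.e.\ $\operatorname{lie}\mathrm R_{\mathrm u}H\subset\mathfrak q_s$ — one line, with no decomposition into $\operatorname{ad}s$-homogeneous components. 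You instead argue infinitesimally: the degree-zero component $x_0$ of $x\in\operatorname{lie}\mathrm R_{\mathrm u}H$ is a limit of nilpotent elements, hence nilpotent, and a nilpotent element commuting with $s$ satisfies $(x_0,s)=0$; your detour through the derived algebra of $\mathfrak z_{\mathfrak g}(s)$ is correct but can be shortened, since $\operatorname{ad}x_0\,\operatorname{ad}s$ is a product of commuting operators with one factor nilpotent, hence has trace zero. Your route buys two things the paper leaves tacit: an explicit verification that $\mathrm Z_G^\perp(\mathrm L_H)$ is reductive — without which the appeal to Hilbert--Mumford/Kempf for non-closed orbits is not legitimate — and the explicit inclusion $\mathfrak l_H\subset\mathfrak q_s$, which the paper subsumes in the unexplained ``Therefore $\operatorname{lie}H\subset\mathfrak q_s$.'' The paper's route buys brevity and sidesteps the nilpotency-of-limits argument entirely. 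Both are complete proofs of the proposition.
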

\begin{proof} Suppose the $\mathrm Z_G^\bot(\mathrm L_H)$-orbit of $v_{\mathrm{R_u}H}$ is not closed. Then there exists a nonzero rational semisimple element $s\in\operatorname{lie}~\mathrm Z_G^\bot(\mathrm L_H)$ such that $v_{\mathrm{R_u}H}\in\bigoplus\limits_{i\ge 0}\operatorname{Hom}(\operatorname{lie}\mathrm{R_u}H, \mathfrak g)_i^s$ (Hilbert-Mumford criterion). Hence $\bigoplus\limits_{i\ge 0}\operatorname{Hom}(\operatorname{lie}\mathrm{R_u}H, \mathfrak g)_i^s=\operatorname{Hom}(\operatorname{lie}\mathrm{R_u}H, \mathfrak p_s)$, $\operatorname{lie}$~R$_\mathrm uH$ contained in $\mathfrak p_s$. Let $P_s\subset G$ be an irreducible subgroup such that $\operatorname{lie}$~P$_s=\mathfrak p_s$. Obviously P$_s/$Q$_s\cong\mathbb K^*$ and hence R$_\mathrm uH\subset$R$_\mathrm u$P$_s$ and there are no homomorphisms from R$_\mathrm uH$ to $\mathbb K^*$, R$_\mathrm uH$ contained in Q$_s$. Therefore $\operatorname{lie}$~$H\subset\mathfrak q_s$.\end{proof}
Propositions~\ref{EG} and ~\ref{GE} are equivalent to Theorem~\ref{th2}.


\begin{thebibliography}{99}
\bibitem{Gr} Grosshans F.D., {\it Algebraic Homogeneous Spaces and Invariant Theory}, Lecture Notes in Math. 1673 Springer-Verlag, ~1997
\bibitem{Bor} Bien, F., Borel, A., {\it Sous-groupes epimorphiques des groupes lineaires algebriques II}, C.R. Acad. Sci. Paris. Ser. I 315, p. 1341-1346,~1992
\end{thebibliography}
\end{document}